 \newtheorem{thm}{\indent Theorem}[section]
 \newtheorem{cor}{\indent Corollary}[section]
 \newtheorem{lem}{\indent Lemma}[section]
 \theoremstyle{definition}
 \newtheorem{rem}{\indent Remark}[section]
 \numberwithin{equation}{section}
\def\f{\frac}
\def\e{\eqref}
\def\lab{\label}
\def\vph{\varphi}
\def\i13{(i=1,2,3)}
\def\2i4{(i=2,3,4)}
\def\ij1n{(i,j=1,\cdots,n)}
\def\dis{\displaystyle}
\newcommand{\beq}{\begin{equation}}
\newcommand{\eeq}{\end{equation}}
\newcommand{\bs}{\begin{split}}
\newcommand{\es}{\end{split}}
\begin{document}

\title{\bf Exact boundary observability for nonautonomous quasilinear wave equations
}
\author{
Lina Guo\thanks{School of Mathematical Sciences, Fudan University,
Shanghai 200433, China. Email: {\tt guolina@fudan.edu.cn.}}\quad
Zhiqiang Wang\thanks{School of Mathematical Sciences, Fudan
University, Shanghai 200433, China. Email: {\tt wzq@fudan.edu.cn.}}
}
\date{}
\maketitle

\begin{abstract}
By means of a direct and constructive method based on the theory of
semiglobal $C^2$ solution, the local exact boundary observability is
shown for nonautonomous 1-D quasilinear wave equations. The
essential difference between nonautonomous wave equations and
autonomous ones is also revealed.
\end{abstract}

{\bf Key words:}\quad Nonautonomous quasilinear wave equation, exact
boundary observability, semiglobal $C^2$ solution

{\bf 2000 MR Subject Classification:}\quad 35L70, 93B07, 35R30

\section{Introduction}

This paper deals with the following 1-D quasilinear nonautonomous
wave equation
\beq\lab{1.1}
u_{tt}-c^2(t,x,u,u_x,u_t)u_{xx}=f(t,x,u,u_x,u_t), \eeq where $c, f$
are suitably smooth functions with respect to their arguments,
$c=c(t,x,u,u_x,u_t)>0$ is the propagation speed of the nonlinear
wave, and $f$ satisfies \beq\lab{1.2} f(t,x,0,0,0)\equiv 0. \eeq
$u=0$ is an equilibrium of system \e{1.1}. Here we emphasize that
the wave speed $c$ explicitly depends on time which will bring some
new phenomena in the features of observability, as we will see
later.

The boundary conditions can be one of the following physically
meaningful inhomogeneous boundary conditions
\begin{subequations}\label{1.3}
\begin{align}\label{1.3.1}
 &x=0:\ u=h(t),\\\label{1.3.2}
 &x=0:\ u_x=h(t),\\\label{1.3.3}
 &x=0:\ u_x-\alpha u=h(t),\\\label{1.3.4}
 &x=0:\ u_x-\beta u_t=h(t)
\end{align}
\end{subequations}
and a similar one of the following boundary conditions
\begin{subequations}\label{1.4}
\begin{align}\label{1.4.1}
 &x=L:\ u=\bar h(t),\\\label{1.4.2}
 &x=L:\ u_x=\bar h(t),\\\label{1.4.3}
 &x=L:\ u_x+\bar\alpha u=\bar h(t),\\\label{1.4.4}
 &x=L:\ u_x+\bar\beta u_t=\bar h(t),
\end{align}
\end{subequations}
where $\alpha, \beta, \bar\alpha,$ and $\bar\beta$ are positive
constants.

The initial condition is given by
\beq\lab{1.5}
t=t_0:\ (u,u_t)=(\varphi(x), \psi(x)),\quad 0\leq x\leq L,
\eeq
where $(\varphi, \psi)\in C^2[0,L]\times C^1[0,L]$, and the
conditions of $C^2$ compatibility are supposed to be satisfied
at the points $(t_0,0)$ and $(t_0,L)$, respectively.

The \emph{exact observability} problem which we are interested in
can be described as follows: can we find $T>0$ and some suitable
observations $k(t)$ (the value of $u$ or $u_x$ of the solution
$u=u(t,x)$ to the mixed problem \e{1.1} and \e{1.3}-\e{1.5}), such
that the initial data $\varphi$ can be uniquely determined by the
observations $k(t)$ together with the known given boundary functions
$(h(t),\bar h(t))$ on the time interval $[t_0,t_0+T]$? Moreover, can
we have an estimate (\emph{observability inequality}) on $\varphi$
in terms of $k(t)$ and $(h(t),\bar h(t))$? More precisely, noting
that $u=0$ is an equilibrium of system \e{1.1}, we will focus on the
local exact boundary observability for the nonautonomous mixed
problem \e{1.1} and \e{1.3}-\e{1.5} in a neighbourhood (in
$C^2$-sense) of $u=0$.

Exact controllability and observability for wave equations (and
other partial differential equations) have been intensively studied
since Russell \cite{Russell} and Lions \cite{Lions}. Classical
techniques to derive observability estimates for linear wave
equations are mainly the following : \emph{Multiplier Methods} (see
\cite{Komornik,Lions,Osses}), \emph{Carleman Estimates} (see
\cite{DZZ,Iman,Yao,Zhang}), \emph{Microlocal Analysis} (see
\cite{BLR,Burq}), \emph{Spectral Method} (see
\cite{LiuRao,Shubov,Tucsnak}) etc. Due to the duality arguments (see
\cite{Komornik, Lions, Russell, Zuazua}), we know that exact
controllability of a linear system can be reduced to the
observability estimate of its dual system. However, in general, the
duality principle dose not hold for nonlinear dynamical systems (see
\cite{Coron, Libook09}). Consequently, one has to study
controllability and observability for the nonlinear systems
separately. With usual energy estimates and perturbation method,
Pan, Teo and Zhang \cite{PTZ} studied observability (in that paper,
it is called state observation problem) for a semilinear wave
equation, and they also gave a conceptual algorithm of resolution.
Concerning the controllability for nonlinear wave equations, there
are also some results (see \cite{LaTri,Zuazua3, Zuazua4} for
semilinear case, and \cite{ZhouLei} for quasilinear case). For
autonomous 1-D quasilinear wave equations, Li and his collaborators
established a complete theory on exact boundary controllability and
observability, by means of a direct and constructive method which is
based on the theory of semiglobal $C^2$ solution (see
\cite{Libook09,Li,LiYu}).

Up to our knowledge, there are few results on controllability and
observability for nonautonomous wave equations, in which the wave
operator (the principle part of the wave equation) depends
explicitly on time. Cavalcanti \cite{Cavalcanti} established exact
boundary controllability for $n$-D linear nonautonomous wave
equation by utilizing  \emph{Hilbet Uniqueness Method} of Lions
\cite{Lions}. In \cite{Cavalcanti}, the assumption that the wave
speed is larger than a positive constant is vital to obtain the main
results. however, as is pointed out in Section 2, the degenerate
case that the propagation speed approximates zero may produce some
delicate new phenomena in observability (and also in
controllability, see \cite{Wang2}).

In this paper, we establish local exact boundary observability, by
Li's method (with some modification), for some nonautonomous 1-D
quasilinear wave equations, while exact boundary controllability for
these equations has already been established by Wang \cite{Wang2}.
Li's method is said to be a direct and constructive one, since it
treats the quasilinear system directly without any linearization and
fixed point (or compactness) arguments and the observability
inequality can be obtianed by solving some well-posed mixed
problems.  This method is based on the theory of so-called
\emph{semi-global classical solution} (see \cite{Libook09, LiJin,
Wang1}) which guarantees the well-posedness of classical solution on
a preassigned (possibly quite large) time interval $[t_0,t_0+T]$.
Li's method is very useful in 1-D case and it can be used for all
possible linear or nonlinear boundary conditions.

Compared with the results in \cite{Li}, the main difficulties that
we encounter here lie in two parts: to get the existence and
uniqueness of semi-global $C^2$ solution to the nonautonomous mixed
problem \e{1.1} and \e{1.3}-\e{1.5}; and to have a better estimate
on observation time $T$ which is no more as easy as the autonomous
case \cite{Li}. Moreover, we have pay attention to the influence of
the boundary functions $(h(t),\bar h(t))$, while \cite{Li} considers
only the situation that $h\equiv \bar h\equiv 0$. We point out also
that the results obtained in this paper cover all the results
obtained in \cite{Li}.

The organization of this paper is as follows: by a simple example,
we show the possible features of the exact observability for
nonautonomous quasilinear wave equations in Section 2. The
fundamental theory of semiglobal $C^2$ solution to the nonautonomous
mixed problem \e{1.1} and \e{1.3}-\e{1.5} is introduced in Section
3. Adopting Li's method, the main results, Theorems \ref{thm
4.1}-\ref{thm 4.2}, are proved in Section 4. Finally, some remarks
are given in Section 5.

For the convenience of statement, we denote in the whole paper that
\beq\lab{1.6}
l=
\begin{cases}
2 & \text{for } \e{1.3.1}\\1 & \text{for } \e{1.3.2}-\e{1.3.4}
\end{cases}
\eeq
and
\beq\lab{1.7}
\bar l=
\begin{cases}
2 & \text{for } \e{1.4.1}\\1 & \text{for } \e{1.4.2}-\e{1.4.4}.
\end{cases}
\eeq  We also denote C as a positive constant which is independent
of the solution and  C can be different constants in different
situations.

\section{Features of exact observability for nonautonomous
quasilinear wave equations}

The results in \cite{Li} show that, for autonomous quasilinear wave
equations, one can choose proper boundary observed values to
uniquely determine any given small initial value $(\vph, \psi)$ at
$t=0$, provided that the observability time $T>0$ is large enough.
By translation, this conclusion still holds if the observation
starts at the initial time $t=t_0$ instead of $t=0$. Hence, the
observability time $T$ can be chosen to be independent of $t_0$ in
the autonomous case.

In nonautonomous cases, however, generically speaking, the exact
boundary observability should depend on the selection of the initial
time. Consider the linear nonautonomous wave equation
\beq\lab{2.1}
u_{tt}-(c(t))^2u_{xx}=0, \eeq which is a special case of \e{1.1} as
$c$ depends only on time $t$. One can see that:

1) the two-sides exact boundary observability holds for \e{2.1} on
the time interval $[t_0,t_0+T]$ if and only if $\int_{t_0}^{t_0+T}
c(t)dt \geq L$;

2) the one-side exact boundary observability holds for \e{2.1} on
the time interval $[t_0,t_0+T]$ if and only if $\int_{t_0}^{t_0+T}
c(t) dt \geq 2L$.\\
By the different choices of $c(t)$, it is easy to see that there are
three possibilities: the exact boundary observability for \e{2.1}
holds

1) only for some initial time $t_0\in \mathbb{R}$, but not for the
others;

2) for none of the initial time $t_0\in \mathbb{R}$;\\ or

3) for all the initial time $t_0\in \mathbb{R}$. \\
However, there is only the possibility 3) in autonomous case as
shown by Remark \ref{rem 5.5}.

Moreover, in general the observability time $T$ for \e{2.1} depends
on the initial time $t_0$, that is to say, the exact boundary
observability holds only when $T>T(t_0)$. On the other hand, the
observability time $T$ might be independent of $t_0$ in some special
cases, for instance, if $c(t)$ is a suitable periodic function.

Thus, the exact boundary observability for nonautonomous hyperbolic
systems is much more complicated than that in autonomous cases,
and we should pay more attention on it.

\section{Semiglobal $C^2$ solution to 1-D nonautonomous quasilinear wave equations}

In this section we shall establish the theory on the semiglobal $C^2$
solution to the mixed initial-boundary value problem \e{1.1} and
\e{1.3}-\e{1.5} on the domain
\beq\lab{3.1}
R(t_0,T_0)=\{(t,x)|t_0\leq t \leq t_0+T_0, 0\leq x \leq L\},
\eeq
where $T_0>0$ is a preassigned and possibly quite large number.

Suppose that the conditions of $C^2$ compatibility are satisfied at
the points $(t,x)=(t_0,0)$ and $(t_0,L)$, respectively. In order to
get the semiglobal $C^2$ solution to the mixed problem for \e{1.1}
with various kinds of boundary conditions in a unified manner, we
reduce the problem to a corresponding mixed problem for a first
order quasilinear hyperbolic system (cf. \cite{LiYu,Wang2}).

Let
\beq\lab{3.2}
v=u_x,\quad w=u_t
\eeq
and
\beq\lab{3.3}
U=(u,v,w)^T.
\eeq
\e{1.1} can be reduced to the following first order quasilinear system
\beq\lab{3.4}
\begin{cases}
u_t=w,\\ v_t-w_x=0,\\w_t-c^2(t,x,u,v,w)v_x=f(t,x,u,v,w).
\end{cases}
\eeq
Accordingly, the initial condition \e{1.5} reduces to
\beq\lab{3.5}
t=t_0: \ \ U=(\vph(x),\vph'(x),\psi(x))^T,\quad 0\leq x\leq L.
\eeq
Since $c(t,x,u,u_x,u_t)>0$, \e{3.4} is a strictly hyperbolic system with three
distinct real eigenvalues
\beq\lab{3.6}
\lambda_1=-c(t,x,u,v,w)<\lambda_2\equiv 0<\lambda_3=c(t,x,u,v,w),
\eeq
and the corresponding left eigenvectors can be taken as
\beq\lab{3.7}
\begin{cases}
l_1(t,x,U)=(0,c(t,x,u,v,w),1),\\ l_2(t,x,U)=(1,0,0),\\ l_3(t,x,U)=(0,-c(t,x,u,v,w),1).
\end{cases}
\eeq

Setting
\beq\lab{3.8}
v_i=l_i(t,x,U)U\quad\i13,
\eeq
namely,
\beq\lab{3.9}
\begin{cases}
v_1=c(t,x,u,v,w)v+w,\\ v_2=u,\\ v_3=-c(t,x,u,v,w)v+w,
\end{cases}
\eeq
we have
\beq\lab{3.10}
\begin{cases}
v_1+v_3=2w,\\ v_1-v_3=2c(t,x,u,v,w)v.
\end{cases}
\eeq

The boundary condition \e{1.3.1} can be rewritten as \beq\lab{3.11}
x=0:\ \ v_1+v_3=2h'(t) \eeq together with the following condition of
$C^0$ compatibility \beq\lab{3.12} h(t_0)=\vph (0). \eeq In a
neighborhood of $U=0$, the boundary conditions \e{1.3.2}-\e{1.3.3}
can be equivalently rewritten as
\begin{align}\lab{3.13}
x=0:& \quad v_3=p_2(t,v_1,v_2)+q_2(t),\\ \lab{3.14}
x=0:& \quad v_3=p_3(t,v_1,v_2)+q_3(t),
\end{align}
or
\begin{align}\lab{3.15}
x=0:& \quad v_1=\tilde p_2(t,v_2,v_3)+\tilde q_2(t),\\ \lab{3.16}
x=0:& \quad v_1=\tilde p_3(t,v_2,v_3)+\tilde q_3(t).
\end{align}
Similarly, in a neighborhood of $U=0$, the boundary condition \e{1.3.4} can
be rewritten as
\beq\lab{3.17}
x=0:\ \ v_3=p_4(t,v_1,v_2)+q_4(t)
\eeq
or, when
\beq\lab{3.18}
\beta \neq \f {1}{c(t,0,0,0,0)},\quad \forall t \in [t_0,t_0+T_0],
\eeq
\beq\lab{3.19}
x=0:\quad v_1=\tilde p_4(t,v_2,v_3)+\tilde q_4(t).
\eeq

Moreover, we have
\beq\lab{3.20}
p_i(t,0,0)\equiv \tilde p_i(t,0,0)\equiv 0\quad (i=2,3,4),
\eeq
and
\beq\nonumber
\|q_i\|_{C^1[t_0,t_0+T_0]},\|\tilde q_i\|_{C^1[t_0,t_0+T_0]}\rightarrow 0
\quad \2i4
\eeq
as $\|h\|_{C^1[t_0,t_0+T_0]}\rightarrow 0$.

Similarly, the boundary condition \e{1.4.1} can be rewritten as
\beq\lab{3.21}
x=L:\quad v_1+v_3=2\bar h'(t)
\eeq
together with
\beq\lab{3.22}
\bar h(t_0)=\vph (L).
\eeq
\e{1.4.2}-\e{1.4.4} can be rewritten as
\beq\lab{3.23}
x=L:\quad v_1=p(t,v_2,v_3)+q(t).
\eeq
Moreover, when
\beq\lab{3.24}
\bar\beta\neq\f {1}{c(t,L,0,0,0)},\quad\forall t\in [t_0,t_0+T_0],
\eeq
\e{3.23} can be equivalently rewritten as
\beq\lab{3.25}
x=L:\quad v_3=\tilde p(t,v_1,v_2)+\tilde q(t),
\eeq
where
\beq\lab{3.26}
p(t,0,0)\equiv \tilde p(t,0,0)\equiv 0,
\eeq
and
\beq\nonumber
\|q\|_{C^1[t_0,t_0+T_0]},\|\tilde q\|_{C^1[t_0,t_0+T_0]}\rightarrow 0
\eeq
as $\|\bar h\|_{C^1[t_0,t_0+T_0]}\rightarrow 0$.

Obviously, the conditions of $C^2$ compatibility at the points
$(t_0,0)$ and $(t_0,L)$ for the mixed problem \e{1.1}
and\e{1.3}-\e{1.5} guarantee the conditions of $C^1$ compatibility
for the corresponding mixed problem of the first order quasilinear
hyperbolic system \e{3.4}-\e{3.5},\e{3.11} (or \e{3.15} or \e{3.16}
or \e{3.17}) and \e{3.21} (or \e{3.23}).

Applying the theory on the semiglobal $C^1$ solution to the mixed
initial-boundary value problem of first order nonautonomous
quasilinear hyperbolic systems (cf. \cite{Wang1}), we get

\begin{lem}{\bf (Semiglobal $C^2$ solution)}\lab{lem 3.1}
Suppose that $c,f\in C^1, c>0$ and \e{1.2} holds. Suppose furthermore
that $\vph\in C^2,$ $\psi\in C^1,$ $h\in C^l,\bar h\in C^{\bar l}$
$($see \e{1.6}-\e{1.7}$)$ and the conditions of $C^2$ compatibility are
supposed to be satisfied at the points $(t_0,0)$ and $(t_0,L)$
respectively. For any given $T_0>0$ (possibly quite large), if
$\|(\vph,\psi)\|_{C^2[0,L]\times C^1[0,L]}$ and
$\|(h,\bar h)\|_{C^l[t_0,t_0+T_0] \times C^{\bar l}[t_0,t_0+T_0]}$
are sufficiently small (depending on $t_0$ and $T_0$), the mixed problem
\e{1.1} and \e{1.3}-\e{1.5} admits a unique $C^2$ solution $u=u(t,x)$
(called semiglobal $C^2$ solution) on the domain
$R(t_0,T_0)\triangleq\{(t,x)|t_0\leq t\leq t_0+T_0,\ 0\leq x\leq L\}$,
and the following estimate holds
\beq\lab{3.29}
\|u\|_{C^2[R(t_0,T_0)]}\leq C(\|(\vph,\psi)\|_{C^2[0,L]\times C^1[0,L]}
+\|(h,\bar h)\|_{C^l[t_0,t_0+T_0]\times C^{\bar l}[t_0,t_0+T_0]}).
\eeq
\end{lem}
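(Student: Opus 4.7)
The plan is to reduce the quasilinear second-order wave equation to the first-order quasilinear hyperbolic system \e{3.4} already set up above, apply the semiglobal $C^1$ existence theory for first-order nonautonomous quasilinear hyperbolic systems from \cite{Wang1}, and then promote the resulting $C^1$ solution of the first-order system back to a genuine $C^2$ solution of the original wave equation.

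First I would observe that strict hyperbolicity of \e{3.4} has already been verified in \e{3.6}, with three smooth, distinct eigenvalues which stay bounded apart in a small $C^0$-neighbourhood of the equilibrium $U=0$. Next I would check that each of the eight boundary conditions \e{1.3.1}--\e{1.3.4} and \e{1.4.1}--\e{1.4.4} can be written in the normal form required by the first-order theory, in which the incoming Riemann-like variable ($v_3$ at $x=0$ since $\lambda_3>0$, and $v_1$ at $x=L$ since $\lambda_1<0$) is expressed, by an implicit function argument in a $C^0$-neighbourhood of $U=0$, in terms of the outgoing variable, the characteristic-$\lambda_2$ variable $v_2=u$, and the given boundary datum. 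The reformulations \e{3.11}, \e{3.13}--\e{3.17}, \e{3.21} and \e{3.23} perform precisely this step; the structural identities $p_i(t,0,0)\equiv 0$, $\tilde p_i(t,0,0)\equiv 0$ together with the $C^1$-smallness of the inhomogeneous boundary terms $q_i,\tilde q_i,q,\tilde q$ (as stated after \e{3.20} and \e{3.26}) match the hypotheses of the Wang1 theorem whenever $\|h\|_{C^l}$ and $\|\bar h\|_{C^{\bar l}}$ are sufficiently small.

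The third step is the transport of compatibility: the $C^2$ compatibility of $(\vph,\psi,h,\bar h)$ at the two corners $(t_0,0)$ and $(t_0,L)$ translates, via $v=\vph'(x)$ and $w=\psi(x)$, into the $C^1$ compatibility conditions at the same corners for the reduced first-order mixed problem, as already noted in the paragraph before the statement. One may then invoke the semiglobal $C^1$ existence theorem of \cite{Wang1}: for any preassigned $T_0>0$, provided $\|(\vph,\psi)\|_{C^2\times C^1}$ and $\|(h,\bar h)\|_{C^l\times C^{\bar l}}$ are small enough (depending on $t_0$ and $T_0$), the first-order mixed problem admits a unique $C^1$ solution $U=(u,v,w)^T$ on $R(t_0,T_0)$, together with the linear estimate $\|U\|_{C^1[R(t_0,T_0)]}\leq C(\|(\vph,\psi)\|_{C^2\times C^1}+\|(h,\bar h)\|_{C^l\times C^{\bar l}})$.

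The last and most delicate point is to verify that this $C^1$ solution of the first-order system is in fact a $C^2$ solution of the original wave equation \e{1.1}. Since $v,w\in C^1$, the relations $u_t=w$ and $v_t-w_x=0$ in \e{3.4} give that $u_x$ and $u_t$ are both $C^1$, so $u\in C^2$; the identity $u_{xt}=u_{tx}$ is consistent because $u_{tx}=w_x$ (from $u_t=w$) while $u_{xt}=v_t=w_x$ (from $u_x=v$ together with the second equation of \e{3.4}). In particular $v\equiv u_x$, and the third equation of \e{3.4} becomes exactly $u_{tt}-c^2(t,x,u,u_x,u_t)u_{xx}=f(t,x,u,u_x,u_t)$, i.e.\ \e{1.1}. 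Upgrading the $C^1$ bound on $U$ to the $C^2$ bound \e{3.29} on $u$ then uses \e{3.4} to trade one spatial or temporal derivative of $v$ or $w$ for another, so $\|u\|_{C^2}$ is controlled by $\|U\|_{C^1}$ plus the $C^0$-norms of $c$ and $f$. The principal obstacle I expect is precisely this translation-of-formulations: the careful passage from $C^2$ corner compatibility to $C^1$ corner compatibility and the verification that all eight boundary conditions really admit the normal form in a uniform $C^0$-neighbourhood of $U=0$ on $[t_0,t_0+T_0]$; once this bookkeeping is complete, existence, uniqueness and the estimate are inherited directly from \cite{Wang1}.
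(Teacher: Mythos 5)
Your proposal follows essentially the same route as the paper: Section~3 of the paper performs exactly this reduction to the first-order system \eqref{3.4}, rewrites the boundary conditions in the normal forms \eqref{3.11}--\eqref{3.25}, notes that $C^2$ compatibility becomes $C^1$ compatibility for the reduced problem, and then obtains Lemma~\ref{lem 3.1} by invoking the semiglobal $C^1$ theory of \cite{Wang1}, with the recovery of $v\equiv u_x$ and the $C^2$ regularity of $u$ being the standard (implicit) final step, just as in your last paragraph (where your justification of $u_{xt}=v_t$ is worded slightly circularly, but the intended argument --- $v-u_x$ vanishes at $t=t_0$ and has zero $t$-derivative by the second equation of \eqref{3.4} --- is the correct and standard one).
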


\begin{cor}\lab{cor 3.1}
Suppose that $c,f\in C^1, c>0$ and \e{1.2} holds. If
$\|(\vph,\psi)\|_{C^2[0,L]\times C^1[0,L]}$ is sufficiently small,
then Cauchy problem \e{1.1} and \e{1.5} admits a unique global $C^2$
solution $u=u(t,x)$ on the whole maximum determinate domain
$D=\{(t,x)|t\geq t_0, x_1(t)\leq x\leq x_2(t)\}$ (see \cite{Li}),
where the two curves $x_1(t),x_2(t)$ are defined as follows: \beq
\begin{cases}
\displaystyle\f {dx_1}{dt}=c(t,x_1,u(t,x_1),u_x(t,x_1),u_t(t,x_1)),\\
t=t_0:\ x_1=0
\end{cases}
\eeq and  \beq
\begin{cases}
\displaystyle\f {dx_2}{dt}=-c(t,x_2,u(t,x_2),u_x(t,x_2),u_t(t,x_2)),\\
t=t_0:\ x_2=L,
\end{cases}
\eeq respectively. Moreover, we have the following estimate
\beq\lab{3.30} \|u\|_{C^2[D]}\leq C\|(\vph,\psi)\|_{C^2[0,L]\times
C^1[0,L]}. \eeq
\end{cor}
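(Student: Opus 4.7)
The plan is to reduce the Cauchy problem on $[0,L]$ to an auxiliary mixed initial-boundary value problem on a larger spatial interval, to which Lemma~\ref{lem 3.1} applies, and then to localize by finite propagation speed to recover the solution on the maximum determinate domain $D$.

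\textbf{Step 1: a priori bound on the size of $D$.} Since $c(t,x,0,0,0)>0$ is continuous on $[t_0,\infty)\times[0,L]$, I choose $c_{\min},c_{\max}>0$ such that for every sufficiently small $C^2$ solution one has $\tfrac12 c_{\min}\leq c(t,x,u,u_x,u_t)\leq 2c_{\max}$ on any compact subset of interest. The forward characteristic $x_1(t)$ and the backward characteristic $x_2(t)$ issued from $(t_0,0)$ and $(t_0,L)$ then approach each other at a rate bounded below by $c_{\min}$, so they meet at some finite time $T^\ast\leq t_0+2L/c_{\min}$, and $D$ is contained in the rectangle $[t_0,t_0+T_0]\times[0,L]$ for any fixed $T_0>2L/c_{\min}$.

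\textbf{Step 2: extension of the initial data.} Pick a spatial enlargement $[-a,L+a]$ with $a>2c_{\max}T_0$. I extend $(\varphi,\psi)$ to $(\tilde\varphi,\tilde\psi)\in C^2[-a,L+a]\times C^1[-a,L+a]$ so that the extensions coincide with $(\varphi,\psi)$ on $[0,L]$, vanish identically near $x=-a$ and $x=L+a$, and satisfy a linear bound $\|\tilde\varphi\|_{C^2}+\|\tilde\psi\|_{C^1}\leq C\|(\varphi,\psi)\|_{C^2[0,L]\times C^1[0,L]}$. Such an extension is standard (e.g.\ reflection plus a $C^\infty$ cut-off). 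On $[-a,L+a]$, I impose the trivial homogeneous Dirichlet boundary condition $u(\cdot,-a)=u(\cdot,L+a)\equiv 0$; the compatibility at $t=t_0$ to order $C^2$ is automatic from the vanishing of the extended data near the artificial boundary.

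\textbf{Step 3: apply Lemma~\ref{lem 3.1} on the enlarged mixed problem.} With $h\equiv\bar h\equiv 0$ and the extended initial data, the mixed problem for \e{1.1} on $[t_0,t_0+T_0]\times[-a,L+a]$ satisfies all hypotheses of Lemma~\ref{lem 3.1}. Provided the original data norm is small enough (the smallness threshold being dictated by the fixed $T_0$ and $a$ chosen in Steps 1--2), the lemma yields a unique semiglobal $C^2$ solution $\tilde u$ on this enlarged rectangle, together with the estimate
\[
\|\tilde u\|_{C^2}\leq C(\|\tilde\varphi\|_{C^2}+\|\tilde\psi\|_{C^1})\leq C\|(\varphi,\psi)\|_{C^2[0,L]\times C^1[0,L]}.
\]

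\textbf{Step 4: localize to $D$ via finite propagation speed.} The eigenstructure \e{3.6}--\e{3.7} of the first-order reduction \e{3.4} implies that the value of any $C^2$ solution at a point $(t,x)$ depends only on the initial data over the backward domain of determinacy cut out by the $\lambda_1$- and $\lambda_3$-characteristics through $(t,x)$. For $(t,x)\in D$, both backward characteristics terminate inside $[0,L]$, so the restriction $\tilde u|_D$ depends only on $(\varphi,\psi)$ and therefore solves the Cauchy problem \e{1.1}, \e{1.5} on $D$. Uniqueness on $D$ follows from the same domain-of-dependence property applied to the difference of two $C^2$ solutions. The estimate \e{3.30} is inherited from the one above, after noting that the characteristics defining $D$ are themselves controlled by $\tilde u$ through their ODEs, which gives the required self-consistent bound on $D$. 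The delicate point in this plan is Step~1: the domain $D$ is defined through the unknown solution, so I must argue that the smallness threshold in Lemma~\ref{lem 3.1} (which fixes $T_0$) can indeed be tightened so that $T^\ast$ stays below $t_0+T_0$ simultaneously, a self-bootstrapping argument relying on the continuity of $c$ at $U=0$.
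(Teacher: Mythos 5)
Your overall plan (extend the data to a larger interval, solve an auxiliary mixed problem via Lemma~\ref{lem 3.1}, then restrict to the determinate domain by uniqueness/domain of dependence) is workable when $D$ is compact, but Step~1 smuggles in a hypothesis the corollary does not have: a uniform positive lower bound $c_{\min}$ for the wave speed valid for \emph{all} $t\geq t_0$. The corollary only assumes $c>0$ pointwise, and the very feature of the nonautonomous setting emphasized in Section~2 and Remark~\ref{rem 4.1} is that $c(t,x,0,0,0)$ may degenerate as $t\to\infty$ (e.g.\ $c=e^{-t}$). In that case $\int_{t_0}^{\infty}\inf_{0\le x\le L}c(t,x,0,0,0)\,dt$ can be smaller than $L$, the curves $x_1(t)$ and $x_2(t)$ never intersect, and the maximum determinate domain $D$ is \emph{unbounded} in $t$. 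Then there is no finite $T_0$ with $D\subset[t_0,t_0+T_0]\times[0,L]$, and your reduction to Lemma~\ref{lem 3.1} on a fixed enlarged rectangle only produces the solution on $D\cap\{t\leq t_0+T_0\}$, not on the whole of $D$ as claimed in the corollary. You also cannot repair this by letting $T_0\to\infty$ inside your bootstrap, because the smallness threshold in Lemma~\ref{lem 3.1} depends on $T_0$: exhausting an unbounded $D$ by rectangles would force the data to shrink with $T_0$, whereas the corollary asserts one fixed smallness condition and one constant $C$ in \eqref{3.30}.

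Note also that the paper does not derive the corollary the way you do: it appeals to the classical theory of the Cauchy problem on the maximum determinate domain (cf.\ the reference to Li's work), where no boundary influence enters $D$, the characteristic estimates close by themselves, and existence, uniqueness and \eqref{3.30} hold on all of $D$ irrespective of whether $D$ is bounded. If one adds the assumption $\inf_{t\geq t_0,\,0\leq x\leq L}c(t,x,0,0,0)>0$ — as holds automatically in the autonomous case, and as holds in the paper's actual use of the corollary in Theorems~\ref{thm 4.1}--\ref{thm 4.2}, where $t$ and $x$ are interchanged so that the ``time'' interval is $[0,L]$ and the relevant speed $1/c$ is controlled on a compact set — then $D$ is compact, your Step~1 bound is legitimate, and the remaining steps (extension with linear norm control, trivial compatibility at the artificial boundary, restriction by domain of dependence together with the self-consistency bootstrap you flag) go through. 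As written, however, the proposal proves only this special case, not the statement under the stated hypotheses.
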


\vskip 5mm

\begin{center}
\scriptsize \setlength{\unitlength}{1mm}
\begin{picture}(50,45)
\linethickness{1pt} \put(5,3){\vector(1,0){48}}
\put(10,-5){\vector(0,1){50}} \put(45,-2){\line(0,1){42}}
\qbezier(25,18)(20,5)(10,3) \qbezier(25,18)(35,6)(45,3)
\put(5,4){$t_0$} \put(5,40){$t$} \put(13,10){$x_1(t)$}
\put(34,10){$x_2(t)$} \put(11,-1){$0$} \put(41,-1){$L$}
\put(50,-1){$x$} \put(24,6){$D$}
\end{picture}
\vskip5mm  Figure 1. Maximum determinate domain $D$ of Cauchy
problem
\end{center}

\vskip 5mm

\section{Exact boundary observability for 1-D nonautonomous quasilinear wave equations}

Now we consider the exact boundary observability for system \e{1.1}
and \e{1.3}-\e{1.5}. Let $t_0$ be the initial time and let $T$ be
the observability time. Define the domain $R(t_0,T)$ similar to
\e{3.1}.

The principle of choosing the observed value is that the observed
value together with the boundary condition can uniquely determine
the value $(u,u_x)$ on the boundary (cf. \cite{Li}). Hence, the
observed value at $x=0$ can be taken as
\begin{subequations}\label{4.2}
\begin{align}\nonumber
 &1.\ u_x=k(t) \ \text{for \e{1.3.1}, then}\\\label{4.2.1}
 &\quad\quad\quad\quad\quad\quad\quad\quad x=0:\quad (u,u_x)=(h(t),k(t)),
 \\\nonumber
 &2.\ u=k(t) \ \text{for \e{1.3.2}, then}\\\label{4.2.2}
 &\quad\quad\quad\quad\quad\quad\quad\quad x=0:\quad (u,u_x)=(k(t),h(t)),
 \\\nonumber
 &3.\ u=k(t) \ \text{for \e{1.3.3}, then}\\\label{4.2.3}
 &\quad\quad\quad\quad\quad\quad\quad\quad x=0:\quad (u,u_x)=(k(t),\alpha k(t)+h(t)),
 \\\nonumber
 &4.\ u=k(t)\  \text{for \e{1.3.4}, then}\\\label{4.2.4}
 &\quad\quad\quad\quad\quad\quad\quad\quad x=0:\quad (u,u_x)=(k(t),\beta k'(t)+h(t)).
\end{align}
\end{subequations}
Then, by means of the observed value at $x=0$, we get
\beq\lab{4.3}
x=0:\ (u,u_x)=(a(t),b(t)),
\eeq
and for any given $T$,
\beq\lab{4.4}
\|(a,b)\|_{C^2[t_0,t_0+T]\times C^1[t_0,t_0+T]}\leq C(\|k\|_{C^d[t_0,t_0+T]}
+\|h\|_{C^l[t_0,t_0+T]}),
\eeq
where $l$ is given by \e{1.6} and
\beq\lab{4.5}
d=
\begin{cases}
1 & \text{for }\e{1.3.1},\\
2 & \text{for }\e{1.3.2}-\e{1.3.4}.
\end{cases}
\eeq

The observed value $\bar k(t)$ at $x=L$ can be similarly taken,
then we get
\beq\lab{4.6}
x=L:\ (u,u_x)=(\bar a(t),\bar b(t)),
\eeq
and for any given $T$,
\beq\lab{4.7}
\|(\bar a,\bar b)\|_{C^2[t_0,t_0+T]\times C^1[t_0,t_0+T]}\leq
C(\|\bar k\|_{C^{\bar d}[t_0,t_0+T]}+\|\bar h\|_{C^{\bar l}[t_0,t_0+T]}),
\eeq
where $\bar l$ is given by \e{1.7} and
\beq\lab{4.8}
\bar d=
\begin{cases}
1 & \text{for }\e{1.4.1},\\
2 & \text{for }\e{1.4.2}-\e{1.4.4}.
\end{cases}
\eeq

\begin{thm}{\bf (Two-sides observation)}\lab{thm 4.1}
Suppose that $c,f\in C^1, c>0$ and \e{1.2} holds. Suppose furthermore
that there exists $T>0$ such that
\beq\lab{4.9}
\int_{t_0}^{t_0+T}\inf_{0 \leq x \leq L}c(t,x,0,0,0)\ dt>L. \eeq For
any given initial data $(\phi,\psi)$ with
$\|(\vph,\psi)\|_{C^2[0,L]\times C^1[0,L]}$ to be suitably small,
suppose finally that the conditions of $C^2$ compatibility are
satisfied at the points $(t_0,0)$ and $(t_0,L)$ respectively. Then
the initial data $(\phi,\psi)$ can be uniquely determined by the
observed values $k(t)$ at $x=0$ and $\bar k(t)$ at $x=L$ together
with the known boundary functions $(h(t),\bar h(t))$ on the interval
$[t_0,t_0+T]$. Moreover, the following observability inequality
holds: \beq\lab{4.10} \|(\vph,\psi)||_{C^2[0,L]\times C^1[0,L]}\leq
C(\|(k,\bar k)\|_{C^d[t_0,t_0+T]\times C^{\bar d}[t_0,t_0+T]}
+\|(h,\bar h)\|_{C^l[t_0,t_0+T]\times C^{\bar l}[t_0,t_0+T]}), \eeq
where $d, \bar d, l$ and $\bar l$ are given by \e{4.5}, \e{4.8},
\e{1.6} and \e{1.7} respectively.
\end{thm}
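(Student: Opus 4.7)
The plan is to follow Li's direct and constructive method adapted to the nonautonomous setting via the first-order reduction \e{3.4}. First I would invoke Lemma \ref{lem 3.1} to guarantee a semiglobal $C^2$ solution $u$ on $R(t_0, T)$ together with the bound \e{3.29}; this ensures the observed values $k(t)$ and $\bar k(t)$ are well-defined and controlled by the initial and boundary data. Combining these observations with the boundary conditions as in \e{4.3}--\e{4.7} gives the full traces $(u, u_x, u_t)$ at both $x=0$ and $x=L$, and hence the Riemann-type variables $v_1, v_2, v_3$ from \e{3.9} along both lateral boundaries of $R(t_0, T)$.

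The heart of the argument is the reconstruction of $(\vph, \psi)$ by tracing the two nontrivial characteristic families back to $t = t_0$. For each $x_0 \in [0, L]$, I would use the hypothesis \e{4.9}, combined with the smallness of $u$ in $C^2$ (so that the true speed $c(t,x,u,v,w)$ is uniformly close to $c(t,x,0,0,0)$), to show that the forward rightward $\la_3$-characteristic starting at $(t_0, x_0)$ reaches $x = L$ at some time $t_3(x_0) \in [t_0, t_0+T]$, and similarly the forward leftward $\la_1$-characteristic from $(t_0, x_0)$ reaches $x = 0$ at some $t_1(x_0) \in [t_0, t_0+T]$. Integrating the transport equations for $v_3$ and $v_1$ along these characteristics then expresses $v_3(t_0, x_0)$ and $v_1(t_0, x_0)$ in terms of their boundary values at $(t_3(x_0), L)$ and $(t_1(x_0), 0)$---both observed---plus an integral of lower order source terms controlled by the $C^1$ bound on $U$ coming from \e{3.29}.

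Once $v_1$ and $v_3$ are known on $t = t_0$, the relations \e{3.10} give $\psi(x_0) = (v_1 + v_3)/2$ and $\vph'(x_0) = (v_1 - v_3)/(2c)$ evaluated at $(t_0, x_0)$, while $\vph(x_0)$ is recovered by integration in $x$ from the known value $\vph(0) = a(t_0)$. The observability inequality \e{4.10} then follows by tracking the Lipschitz dependence of each step of this reconstruction on the observed and boundary data via \e{4.4}, \e{4.7}, and \e{3.29}, with a Gronwall-type estimate absorbing the nonlinear source integrals.

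The main obstacle, and the essential difference from the autonomous case treated in \cite{Li}, lies in controlling the characteristic travel times: since $c$ depends explicitly on $t$, the time to cross $[0, L]$ is not simply $L/c$ but a $t$-dependent integral along an a priori unknown characteristic. The $\inf_x$ formulation of \e{4.9} is precisely what allows a uniform lower bound on propagation over all $x_0 \in [0, L]$, while the semiglobal solution provided by Lemma \ref{lem 3.1} is needed to pin down the actual characteristic along which to integrate. A secondary technical point is handling the nonzero boundary data $h, \bar h$ (as opposed to $h \equiv \bar h \equiv 0$ in \cite{Li}), which forces additional boundary terms into the estimates for $(a, b)$ and $(\bar a, \bar b)$ and into the final inequality.
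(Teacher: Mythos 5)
Your setup (Lemma \ref{lem 3.1} for the semiglobal solution, the traces \e{4.3}--\e{4.7}, and the observation that \e{4.9} forces every forward characteristic issuing from $t=t_0$ to reach a lateral boundary within time $T$) matches the paper, but the central reconstruction step has a genuine gap: it is circular. When you integrate the equations for $v_1,v_3$ along a characteristic from $(t_0,x_0)$ to the boundary, the resulting formula for $v_i(t_0,x_0)$ involves the values of \emph{all} components of $U$ at interior points along that characteristic (the coupling terms and $f$, which by \e{1.2} is only $O(|U|)$, not quadratic). Near $t=t_0$ those interior points lie outside the domains of determinacy of the lateral boundaries, so their values are determined by the unknown initial data, not by the observations; bounding them via \e{3.29} reintroduces $\|(\vph,\psi)\|_{C^2\times C^1}$ on the right-hand side with a constant that is in no way small, and no Gronwall argument in $t$ can absorb it, since the quantity under the integral is not controlled by the data you are allowed to use. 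This is precisely the step where ``tracing characteristics forward in time'' fails as an estimate, even though it correctly identifies the geometry.

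The paper avoids this by exchanging the roles of $t$ and $x$: since $c>0$, the equation is solved \emph{sideways} as two Cauchy problems with data \e{4.3} at $x=0$ and \e{4.6} at $x=L$ (Corollary \ref{cor 3.1}), so the solutions $\tilde u$ on $D_r$ and $\tilde{\tilde u}$ on $D_l$ are estimated purely by the observed and boundary data, with no reference to $(\vph,\psi)$ --- this is what breaks the circularity. Condition \e{4.9} (via \e{4.11}--\e{4.12} and an ODE-uniqueness comparison of the bounding characteristics) guarantees $D_r$ and $D_l$ overlap, so their union covers a full slice $t=\tilde T\in(t_0,t_0+T)$, yielding $(\Phi,\Psi)$ there with estimate \e{4.20}. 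Note that $D_r\cup D_l$ meets the line $t=t_0$ only at the two corner points, so one cannot read off $(\vph,\psi)$ from the sideways solves alone: the final step is a \emph{backward} mixed problem on $R(t_0,\tilde T)$ with initial data $(\Phi,\Psi)$ at $t=\tilde T$ and the known Dirichlet data $u=a(t)$, $u=\bar a(t)$, solved by Lemma \ref{lem 3.1}; uniqueness identifies it with the original solution and gives \e{4.25}, hence \e{4.10}. To repair your argument you would need either this sideways/backward structure or an estimate organized as an iteration in the $x$-variable from the boundaries inward, rather than a time-direction Gronwall.
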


\begin{proof}
Noting \e{4.9}, there exists $\varepsilon>0$ such that
\beq\lab{4.11}
\int_{t_0}^{t_0+T}\inf_{0 \leq x \leq L \atop
{|(u,v,w)|\leq\varepsilon}} c(t,x,u,v,w)\ dt>L, \eeq in which
$|(u,v,w)|=\sqrt{|u|^2+|v|^2+|w|^2}$.

By Lemma \ref{lem 3.1}, when $\|(\vph,\psi)\|_{C^2[0,L]\times
C^1[0,L]}$ and $\|(h,\bar h)\|_{C^l[t_0,t_0+T]\times C^{\bar
l}[t_0,t_0+T]}$ are sufficiently small, the mixed problem \e{1.1}
and \e{1.3}-\e{1.5} admits a unique semiglobal $C^2$ solution
$u=u(t,x)$ with small $C^2$ norm on the domain $R(t_0,T)$. Hence, the
$C^d$ and $C^{\bar d}$ norm of the observed value $k(t)$ and $\bar k(t)$
are sufficiently small respectively. In particular, we may suppose
\beq\lab{4.12}
\|u\|_{C^1[R(t_0,T)]}\leq\varepsilon.
\eeq

Noting $c>0$, we can change the role of $t$ and $x$ in equation
\e{1.1} in order to solve it in the $x-$direction.

By Corollary \ref{cor 3.1}, the rightward Cauchy problem for
equation \e{1.1} with the initial condition \e{4.3} admits a unique
$C^2$ solution $u=\tilde u(t,x)$ on the whole maximum determinate
domain $D_r$ and \beq\lab{4.13} \|\tilde u\|_{C^2[D_r]}\leq
C\big(\|k\|_{C^d [t_0,t_0+T]}+\|h\|_{C^l[t_0,t_0+T]} \big). \eeq
Here $D_r=\{(t,x)|t_0 \leq t\leq t_0+T, 0\leq x\leq
\min\{x_1(t),x_2(t)\}\}$, in which the two curves $x_1(t),x_2(t)$
are defined as following \beq\lab{4.15}
\begin{cases}
\dis\f {dx_1}{dt}=-c(t,x_1,u(t,x_1),u_x(t,x_1),u_t(t,x_1)),\\
t=t_0+T:\ x_1=0
\end{cases}
\eeq and \beq\lab{4.16}
\begin{cases}
\displaystyle\f {dx_2}{dt}=c(t,x_2,u(t,x_2),u_x(t,x_2),u_t(t,x_2)),\\
t=t_0:\ x_2=0.
\end{cases}
\eeq

Similarly, the leftward Cauchy problem for equation \e{1.1} with the
initial condition \e{4.6} admits a unique $C^2$ solution
$u=\tilde{\tilde u}(t,x)$ on the whole maximum determinate domain
$D_l$ and
\beq\lab{4.14}
\|\tilde{\tilde u}\|_{C^2[D_l]}\leq C\big(\|\bar k\|_{C^{\bar d}
[t_0,t_0+T]} +\|\bar h\|_{C^{\bar l}[t_0,t_0+T]} \big). \eeq Here
$D_l=\{(t,x)|t_0 \leq t\leq t_0+T, \max\{x_3(t),x_4(t)\}\leq x\leq L
\}$, in which the two curves $x_3(t),x_4(t)$ are defined as
following \beq\lab{4.17}
\begin{cases}
\displaystyle\f {dx_3}{dt}=c(t,x_3,u(t,x_3),u_x(t,x_3),u_t(t,x_3)),\\
t=t_0+T:\ x_3=L
\end{cases}
\eeq and \beq\lab{4.18}
\begin{cases}
\displaystyle\f {dx_4}{dt}=-c(t,x_4,u(t,x_4),u_x(t,x_4),u_t(t,x_4)),\\
t=t_0:\ x_4=L.
\end{cases}
\eeq

We now claim that the domains $D_r$ and $D_l$ must intersect each
other.

Since $x_1=x_1(t)$ passes through the point $(t_0+T,0)$, it follows
from \e{4.15} that \beq\lab{4.19}
x_1(t)=\int_t^{t_0+T}c(t,x_1,u(t,x_1),u_x(t,x_1),u_t(t,x_1)) dt.
\eeq Hence, noting \e{4.11}-\e{4.12}, the intersection point of
$x=x_1(t)$ with the line $x=L$ must be above the point $(t_0,L)$,
where $x=x_4(t)$ passes through. Noting that the ODE in \e{4.15} is
the same as that in \e{4.18}, we conclude by the uniqueness of $C^1$
solution that $x=x_1(t)$ stays above $x=x_4(t)$ all the time.
Similarly, $x=x_3(t)$ always stays above $x=x_2(t)$. Thus $D_r$ and
$D_l$ intersect each other.

Therefore, there exists $\tilde T\in (t_0,t_0+T)$ such that the value
$(u,u_t)=(\Phi (x),\Psi (x))$ on $t=\tilde T$ can be completely determined
by $u=\tilde u(t,x)$ and $u=\tilde{\tilde u}(t,x)$. Then we
get from \e{4.13} and \e{4.14} that
\beq\lab{4.20}
\|(\Phi,\Psi)\|_{C^2[0,L]\times C^1[0,L]}\leq
C\big(\|(k,\bar k)\|_{C^d[t_0,t_0+T]\times C^{\bar d} [t_0,t_0+T]}
+\|(h,\bar h)\|_{C^l [t_0,t_0+T]\times C^{\bar l}[t_0,t_0+T]} \big).
\eeq

Since both $u=\tilde u(t,x)$ and $u=\tilde{\tilde u}(t,x)$ are the
restriction of the $C^2$ solution $u=u(t,x)$ to the original mixed problem
\e{1.1} and \e{1.3}-\e{1.5} on the corresponding maximum determinate domains
respectively, we have
\beq\lab{4.21}
t=\tilde T:\ u=\Phi (x), u_t=\Psi(x),\quad 0\leq x\leq L.
\eeq

By Lemma \ref{lem 3.1}, the backward mixed initial-boundary value problem
\e{1.1} with the initial condition \e{4.21} and the boundary conditions
\begin{align}\lab{4.22}
x=0:&\quad u=a(t),\\\lab{4.23}
x=L:&\quad u=\bar a(t)
\end{align}
admits a unique semiglobal $C^2$ solution $u=\hat u(t,x)$ on
\beq\lab{4.24}
R(t_0,\tilde T)=\{(t,x)|t_0\leq t\leq t_0+\tilde T, 0\leq x\leq L\},
\eeq
since the conditions of $C^2$ compatibility at the points $(t,x)=(\tilde T,0)$
and $(\tilde T, L)$ are obviously satisfied respectively. By the uniqueness
of $C^2$ solution, $u=\hat u(t,x)$ must be the restriction of the original
$C^2$ solution $u=u(t,x)$ on $R(t_0,\tilde T)$, and the following estimate
holds:
\beq\lab{4.25}
\|u\|_{C^2[R(t_0,\tilde T)]}\leq C\big(\|(\Phi,\Psi)\|_{C^2[0,L]\times C^1[0,L]}
+\|(a,\bar a)\|_{C^2[t_0,t_0+T]\times C^2[t_0,t_0+T]}\big ).
\eeq

\vspace{1cm}

\begin{minipage}{80mm}
\begin{center}
\scriptsize \setlength{\unitlength}{1mm}
\begin{picture}(50,45)
\linethickness{1pt} \put(5,3){\vector(1,0){41}}
\put(10,-5){\vector(0,1){54}} \put(10,25){\vector(1,0){5}}
\put(38,15){\vector(-1,0){5}} \put(38,-2){\line(0,1){46}}
\multiput(5,18)(2,0){20}{\line(1,0){1.5}} \put(5,40){\line(1,0){41}}
\qbezier(30,23)(20,8)(10,3) \qbezier(10,40)(18,26)(30,23)
\qbezier(38,40)(33,28)(20,15) \qbezier(20,15)(30,5)(38,3)
\put(0,41){$t_0+T$} \put(5,4){$t_0$} \put(5,19){$\widetilde T$}
\put(7,49){$t$} \put(11,-1){$0$} \put(35,-1){$L$} \put(45,-1){$x$}
\put(14,20){$D_r$} \put(28,14){$D_l$} \put(31,34){$x_3$}
\put(32,6){$x_4$} \put(16,34){$x_1$} \put(15,9){$x_2$}
\end{picture}

\vskip5mm  Figure 2. $D_r$ and $D_l$ intersect each other

\end{center}
\end{minipage}
\begin{minipage}{50mm}
\begin{center}
\scriptsize \setlength{\unitlength}{1mm}
\begin{picture}(60,45)
\linethickness{1pt} \put(5,3){\vector(1,0){41}}
\put(10,-5){\vector(0,1){54}} \put(24,18){\vector(0,-1){5}}
\put(38,-2){\line(0,1){46}} \put(5,18){\line(1,0){41}}
\put(5,40){\line(1,0){41}} \put(0,41){$t_0+T$} \put(5,4){$t_0$}
\put(5,19){$\widetilde T$} \put(7,49){$t$} \put(11,-1){$0$}
\put(35,-1){$L$} \put(45,-1){$x$} \put(20,20){$(\Phi,\Psi)$}
\put(18,8){$R(t_0,\widetilde T)$}
\end{picture}

\vskip5mm  Figure 3. Solve the backward problem on $R(t_0,\widetilde
T)$

\end{center}
\end{minipage}

\vskip 5mm

Finally, \e{4.10} follows immediately from \e{1.5}, \e{4.4}, \e{4.7}
and \e{4.20}. The concludes the proof of Theorem \ref{thm 4.1}.
\end{proof}

\begin{thm}{\bf (One-side observation)}\lab{thm 4.2}
Under the assumptions of Theorem \ref {thm 4.1} $($except \e{4.9}$)$, suppose
furthermore that $\bar\beta$ in the boundary condition \e{1.4.4} satisfies
\beq\label{4.27}
\bar\beta\neq\f {1}{c(t,L,0,0,0)},\quad\forall t\in [t_0,t_0+T]
\eeq
and there exists $T>0$ such that
\beq\label{4.26}
\int_{t_0}^{t_0+T}\inf_{0 \leq x \leq L}c(t,x,0,0,0)\ dt>2L.
\eeq
Then, the initial data $(\varphi, \psi)$ can be uniquely determined by
the observed value $k(t)$ at $x=0$ together with the known boundary
functions $(h(t),\bar h(t))$ on the interval $[t_0,t_0+T]$. Moreover,
the following observability inequality holds:
\beq\lab{4.28}
\|(\vph,\psi)||_{C^2[0,L]\times
C^1[0,L]}\leq C(\|k\|_{C^d[t_0,t_0+T]} +\|(h,\bar
h)\|_{C^l[t_0,t_0+T]\times C^{\bar l}[t_0,t_0+T]}).
\eeq
\end{thm}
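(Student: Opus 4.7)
The plan is to adapt the two-sweep argument of Theorem \ref{thm 4.1} to a single rightward sweep in the $x$-direction emanating from $x=0$. The stronger hypothesis \e{4.26}, with factor $2L$ in place of $L$, is precisely what allows this one-sided sweep to cover the entire slice $[0,L]$ at some intermediate time $\tilde T$, after which the recovery of $(\vph,\psi)$ reduces, as in Theorem \ref{thm 4.1}, to a well-posed backward mixed problem. The non-degeneracy condition \e{4.27} appears only as a technical assumption ensuring that the boundary type \e{1.4.4}, if active, can be written in the form \e{3.25} so that Lemma \ref{lem 3.1} is applicable to the backward problem.

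First I would invoke Lemma \ref{lem 3.1} to produce the forward semiglobal $C^2$ solution $u$ on $R(t_0,T)$ with small $C^2$ norm, and combine the observation $k(t)$ with the boundary datum $h(t)$ at $x=0$ to obtain the Cauchy pair $(u,u_x)|_{x=0}=(a(t),b(t))$ satisfying \e{4.4}. Reading \e{1.1} as a Cauchy problem in the $x$-direction (admissible since $c>0$), Corollary \ref{cor 3.1} then furnishes the rightward solution $\tilde u$ on the maximum determinate domain $D_r$ bounded by the characteristic curves $x_1(t), x_2(t)$ defined exactly as in \e{4.15}-\e{4.16}, with an estimate analogous to \e{4.13}. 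By uniqueness $\tilde u$ coincides with $u$ on $D_r$.

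The geometric heart of the argument is to show that this single domain $D_r$ contains a full horizontal segment $\{\tilde T\}\times[0,L]$ for some $\tilde T\in(t_0,t_0+T)$. As in \e{4.19}, along themselves the characteristics satisfy $x_1(t)=\int_t^{t_0+T}c\,ds$ and $x_2(t)=\int_{t_0}^t c\,ds$, so they meet at an apex value $\tfrac{1}{2}\int_{t_0}^{t_0+T}c\,ds$. The hypothesis \e{4.26}, together with the smallness of $\|u\|_{C^1}$ (which lets me pass from $c(t,x,0,0,0)$ to $c(t,x,u,v,w)$ exactly as in the step from \e{4.9} to \e{4.11}), forces this apex value to exceed $L$ strictly. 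An intermediate value argument then yields $t_1<t_2$ with $\min(x_1(t),x_2(t))\geq L$ for all $t\in[t_1,t_2]$, and any $\tilde T$ in this interval gives $[0,L]\subset D_r\cap\{t=\tilde T\}$.

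Setting $(\Phi,\Psi)=(\tilde u,\tilde u_t)|_{t=\tilde T}$, I obtain an analogue of \e{4.20} in which only the $k$- and $h$-norms (not a $\bar k$-norm) appear on the right. A time-reversed application of Lemma \ref{lem 3.1} to the backward mixed problem on $R(t_0,\tilde T)$, with initial data $(\Phi,\Psi)$ at $t=\tilde T$ and the \emph{original} boundary conditions \e{1.3} and \e{1.4}, produces a unique semiglobal $C^2$ solution $\hat u$; $C^2$ compatibility at $(\tilde T,0)$ and $(\tilde T,L)$ holds automatically because $\tilde u$ is a restriction of $u$. By uniqueness $\hat u=u$ on $R(t_0,\tilde T)$, so $(\vph,\psi)=(\hat u,\hat u_t)|_{t=t_0}$ is uniquely determined by $(k,h,\bar h)$, and chaining the estimates \e{4.4}, the analogue of \e{4.13}, the above bound on $(\Phi,\Psi)$, and \e{3.29} for the backward problem yields \e{4.28}. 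The main obstacle is the geometric step: unlike in Theorem \ref{thm 4.1}, where the two domains $D_r$ and $D_l$ only need to meet, here a single sweep must by itself reach $x=L$, and it is precisely this requirement that forces the factor $2L$ rather than $L$ in \e{4.26}.
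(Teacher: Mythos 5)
Your overall strategy is the same as the paper's: recover the Cauchy data $(u,u_x)|_{x=0}=(a,b)$ from $k$ and $h$, solve the rightward Cauchy problem on $D_r$, use \e{4.26} (strengthened to an analogue of \e{4.11} by the smallness of $\|u\|_{C^1}$) to show that $D_r$ contains a full segment $\{\widetilde T\}\times[0,L]$, then solve a backward mixed problem on $R(t_0,\widetilde T)$ and chain the estimates. Your geometric step is a correct and slightly more explicit version of the paper's terse claim that $D_r$ intersects $x=L$; only note that $x_1$ and $x_2$ do not in general meet at the value $\frac12\int_{t_0}^{t_0+T}c\,dt$ (the speed is evaluated along two different curves), the usable statement being the lower bound $x_1(t^*)=x_2(t^*)\ge\frac12\int_{t_0}^{t_0+T}\inf c\,dt>L$, which is what your argument really needs.

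There is, however, one genuine gap: in the backward step you impose the \emph{original} boundary condition \e{1.3} at $x=0$. When time is reversed, the incoming and outgoing characteristics at $x=0$ exchange roles, so the boundary condition must be solvable for $v_1$ rather than $v_3$. For the types \e{1.3.1}--\e{1.3.3} this is harmless (forms \e{3.11}, \e{3.15}, \e{3.16}), but for the dissipative type \e{1.3.4} the $v_1$-form \e{3.19} exists only under \e{3.18}, i.e. $\beta\neq 1/c(t,0,0,0,0)$, which is \emph{not} among the hypotheses of Theorem \ref{thm 4.2} (only \e{4.27} on $\bar\beta$ is assumed). If $\beta=1/c(t,0,0,0,0)$ at some time, your backward mixed problem prescribes the outgoing variable at $x=0$, Lemma \ref{lem 3.1} (time-reversed) is not applicable, and that case of the theorem is not covered by your argument. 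This is precisely why the paper replaces the $x=0$ condition in the backward problem by the Dirichlet condition \e{4.30}, $u=a(t)$, built from the observed value: a Dirichlet condition can be resolved for either characteristic variable, hence is well-posed in both time directions, and the resulting estimate \e{4.31} still closes the chain to \e{4.28}. You applied exactly this reasoning to justify \e{4.27} at $x=L$, where no observation is available and the original condition \e{1.4} must be kept; the fix is simply to treat $x=0$ as the paper does.
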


\begin{proof}
Similarly to the proof of Theorem \ref{thm 4.1}, the rightward
Cauchy problem for equation \e{1.1} with the initial condition
\e{4.3} admits a unique $C^2$ solution $u=\tilde u(t,x)$ on the
whole maximum determinate domain $D_r$ and estimate \e{4.13} holds.
Under assumption \e{4.26}, $D_r$ must intersect the line $x=L$.

Thus, there exists $\widetilde T\in (t_0,t_0+T)$ such that the value
$(u,u_t)=(\Phi(x),\Psi(x))$ on $t=\widetilde T$ can be completely
determined by $u=\tilde u(t,x)$. Then, we get from \e{4.13} that
\beq\lab{4.29} \|(\Phi,\Psi)\|_{C^2[0,L]\times C^1[0,L]}\leq
C\big(\|k\|_{C^d[t_0,t_0+T]}+\|h\|_{C^l [t_0,t_0+T]}\big). \eeq

Since the conditions of $C^2$ compatibility at the points
$(t,x)=(\widetilde T,0)$ and $(\widetilde T, L)$ are obviously
satisfied respectively, by Lemma \ref{lem 3.1}, the backward mixed
problem \e{1.1} with the initial condition \e{4.21} and the boundary
conditions \e{1.4} and \beq\lab{4.30} x=0:\quad u=a(t) \eeq admits a
unique $C^2$ solution $u=\hat u(t,x)$ on $R(t_0,\widetilde T)$. By
the uniqueness of solution, $u=\hat u(t,x)$ must be the restriction
of the original $C^2$ solution $u=u(t,x)$ on $R(t_0,\widetilde T)$,
and the following estimate holds: \beq\lab{4.31}
\|u\|_{C^2[R(t_0,\widetilde T)]}\leq
C\big(\|(\Phi,\Psi)\|_{C^2[0,L]\times C^1[0,L]} +\|(a,\bar
h)\|_{C^2[t_0,t_0+T]\times C^{\bar l}[t_0,t_0+T]}\big). \eeq

\vskip 5mm

\begin{minipage}{80mm}
\begin{center}
\scriptsize \setlength{\unitlength}{1mm}
\begin{picture}(50,55)
\linethickness{1pt} \put(5,3){\vector(1,0){41}}
\put(10,-5){\vector(0,1){54}} \put(10,18){\vector(1,0){5}}
\put(38,-2){\line(0,1){46}}
\multiput(5,22)(2,0){20}{\line(1,0){1.5}} \put(5,40){\line(1,0){41}}
\qbezier(43,23)(20,4)(10,3) \qbezier(10,40)(18,26)(43,23)
\put(0,41){$t_0+T$} \put(5,4){$t_0$} \put(5,24){$\widetilde T$}
\put(7,49){$t$} \put(11,-1){$0$} \put(35,-1){$L$} \put(45,-1){$x$}
\put(19,18){$D_r$}
\end{picture}

\vskip5mm  Figure 4. $D_r$ intersects $x=L$

\end{center}
\end{minipage}
\begin{minipage}{50mm}
\begin{center}
\scriptsize \setlength{\unitlength}{1mm}
\begin{picture}(60,55)
\linethickness{1pt} \put(5,3){\vector(1,0){41}}
\put(10,-5){\vector(0,1){54}} \put(24,22){\vector(0,-1){5}}
\put(38,-2){\line(0,1){46}} \put(5,22){\line(1,0){41}}
\put(5,40){\line(1,0){41}} \put(0,41){$t_0+T$} \put(5,4){$t_0$}
\put(5,24){$\widetilde T$} \put(7,49){$t$} \put(11,-1){$0$}
\put(35,-1){$L$} \put(45,-1){$x$} \put(20,24){$(\Phi,\Psi)$}
\put(18,12){$R(t_0,\widetilde T)$}
\end{picture}

\vskip5mm  Figure 5. Solve backward problem on $R(t_0,\widetilde T)$

\end{center}
\end{minipage}

\vskip 5mm

Noting \e{1.5}, \e{4.28} follows immediately from \e{4.4} and
\e{4.29}. This finishes the proof of Theorem \ref{thm 4.2}.
\end{proof}

\section{Remarks}

\begin{rem}\lab{rem 4.1}
In Theorem \ref{thm 4.1}, \e{4.9} is a sharp estimate on the
observability time $T$, which guarantees that two maximum
determinate domains $D_r$ and $D_l$ intersect each other. In Theorem
\ref{thm 4.2}, \e{4.26} is a sharp estimate on the observability
time $T$, which guarantees that the maximum determinate domain $D_r$
of the rightward Cauchy problem must intersect the line $x=L$. The
assumptions \e{4.9} and \e{4.26} on the observation time allow the
propagation speed $c$ to be close to zero, which is not the case in
\cite{Cavalcanti} even if the wave equation \e{1.1} is linear, i.e.,
$c=c(t,x)$.
\end{rem}

\begin{rem}\lab{rem 4.2}
In Theorem \ref{thm 4.2}, if the observed value $\bar k(t)$ is
chosen at $x=L$ and we assume \beq\lab{4.32}
\beta\neq\f{1}{c(t,0,0,0,0)},\quad\forall t\in [t_0,t_0+T] \eeq
instead of \e{4.27}, a similar result can be obtained.
\end{rem}

\begin{rem}\lab{rem 4.4}
Consider the $n$-dimensional quasilinear wave equation with rotation
invariance \beq\lab{4.33} u_{tt}-c^2(t,|x|,u,u_t,\,x\cdot\nabla
u)\,\Delta u=f(t,|x|,u,u_t,\,x\cdot\nabla u) \eeq on the hollow ball
\beq\lab{4.34}
D=\Big\{x=(x_1,\cdots,x_n)\in \mathbb{R}^n \Big|\ r_1 \leq |x|\leq
r_2,\ |x|=\Big(\sum_{i=1}^n x_i^2\Big)^\f12\Big\}
\quad(0<r_1<r_2).
\eeq

Under the assumption of spherical symmetry, \e{4.33} can be reduced
to the following 1-D nonautonomous wave equation \beq\lab{4.35}
u_{tt}-c^2(t,r,u,u_t,\,ru_r)\,u_{rr}
=f(t,r,u,u_t,ru_r)+(\f{n-1}{r})c^2(t,r,u,u_t,\,ru_r)\,u_r, \eeq
where $r=|x|$, then we can apply Theorems \ref{thm 4.1}-\ref{thm
4.2} directly to obtain the corresponding exact boundary
observability with spherical symmetry data.
\end{rem}

\begin{rem}\label{rem 5.5}

Different from the nonautonomous case, the exact boundary
observability can be always realized for the 1-D essential
autonomous quasilinear wave equation \beq\lab{4.36}
u_{tt}-c^2(x,u,u_x,u_t)\,u_{xx}=f(t,x,u,u_x,u_t), \eeq provided that
the observability time $T$ is large enough. In fact, by Theorems
\ref{thm 4.1}-\ref{thm 4.2},  two-sides (resp., one-side) exact
boundary observability  for \e{4.36} can be realized on the interval
$[t_0,t_0+T]$ if \beq T>\sup_{0 \leq x \leq L}\f {L}{\
c(x,0,0,0)}\quad  (resp., \quad T>\sup_{0 \leq x \leq L}\f {2L}{\
c(x,0,0,0)}). \eeq

\end{rem}

\section*{Acknowledgements}
The authors would like to thank Professor Tatsien Li and the
referees for their valuable suggestions and comments. This work was
partially supported by the Natural Science Foundation of China
(Grant Number: 10701028).


\end{document}